\documentclass[11pt,reqno]{amsart}

\usepackage[T1]{fontenc}
\usepackage{lmodern}
\usepackage[letterpaper,margin=1in,heightrounded]{geometry}
\usepackage{amsmath,amssymb,mathtools}
\usepackage{booktabs,array}
\usepackage{microtype}
\usepackage{xurl}
\usepackage[hidelinks]{hyperref}

\hypersetup{
  pdftitle={An infinite family of doubly saturated R(3,t)-good graphs},
  pdfauthor={Abhishek Saigal and Akaash R. Parthasarathy},
  pdfsubject={Ramsey theory and saturated graphs},
  pdfkeywords={Ramsey graph, saturated graph, circulant graph,
    computer-assisted proof, Lean}
}

\allowdisplaybreaks[2]
\numberwithin{equation}{section}
\newtheorem{theorem}{Theorem}[section]
\newtheorem{proposition}[theorem]{Proposition}
\newtheorem{lemma}[theorem]{Lemma}
\newtheorem{corollary}[theorem]{Corollary}

\newcommand{\Z}{\mathbb Z}
\newcommand{\Zn}{\mathbb Z_n}
\newcommand{\Circ}{\operatorname{Circ}}
\newcommand{\DS}{\operatorname{DS}}
\newcommand{\norm}[1]{\lVert #1\rVert_n}
\newcommand{\iz}[2]{[#1,#2]_{\mathbb Z}}
\newcommand{\card}[1]{\lvert #1\rvert}

\title[An infinite family of doubly saturated Ramsey graphs]
{An infinite family of doubly saturated $R(3,t)$-good graphs}

\author[Abhishek Saigal]{Abhishek Saigal\textsuperscript{1,*}}

\author[Akaash R. Parthasarathy]{Akaash R. Parthasarathy\textsuperscript{2,*}}

\thanks{\textsuperscript{1}University of Illinois Urbana-Champaign.
\textsuperscript{2}Carnegie Mellon University.
\textsuperscript{*}Abhishek Saigal and Akaash R. Parthasarathy contributed
equally to this work.}

\subjclass[2020]{Primary 05C55. Secondary 05C35, 03B35}
\keywords{Ramsey graph, saturated graph, circulant graph, computer-assisted proof, Lean}

\begin{document}

\begin{abstract}
For every odd integer $t\ge17$, we prove that an explicit circulant graph
on $5t-10$ vertices is doubly saturated $R(3,t)$-good.  The graph is
triangle-free and has independence number $t-1$.  Adding any nonedge
creates a triangle, whereas deleting any edge creates an independent set
of order $t$.  This settles
Conjecture~2 of Przybocki, Mackey, Heule, and Subercaseaux.  A cyclic sumset
identity and explicit witnesses prove the local saturation properties.
Writing $t=2m+1$, a five-layer reduction proves the independence bound via
a uniform affine certificate for $m\ge30$ and an exhaustive checker for
$8\le m\le29$.  The checker soundness and the complete argument are
formalized in Lean~4.32.2.  Consequently,
$2t-1\le \DS(3,t)\le 5t-10$ for odd $t\ge17$.
\end{abstract}

\maketitle

\section{Introduction}

A graph is \emph{$R(s,t)$-good} if it contains neither a clique of order
$s$ nor an independent set of order $t$.  It is \emph{doubly saturated
$R(s,t)$-good} if it is $R(s,t)$-good, adding any missing edge destroys
that property, and deleting any existing edge also destroys it.  Let
$\DS(s,t)$ denote the minimum order of such a graph, when one exists.
This notion was introduced systematically by Przybocki, Mackey, Heule, and
Subercaseaux~\cite{PrzybockiEtAl2026}, building on earlier work on Ramsey
saturation~\cite{GrinsteadRoberts1982}.

For every positive integer $r$, write $\Z_r:=\Z/r\Z$.  For $n\ge3$ and
$S\subseteq\{1,\ldots,\lfloor n/2\rfloor\}$, write
$\Circ(n,S)$ for the graph with vertex set $\Zn$ in which $x$ and $y$
are adjacent exactly when their least circular distance belongs to $S$.
For integers $a\le b$, write
$\iz{a}{b}=\{a,a+1,\ldots,b\}$.
The following was Conjecture~2 of~\cite{PrzybockiEtAl2026}.

\begin{theorem}\label{thm:main}
Let $t\ge17$ be odd, put $n=5t-10$, and define
\[
 S_t=\{t-4,t-3\}
 \cup \iz{t+1}{(3t-9)/2}
 \cup \{(3t-5)/2,2t-4\}.
\]
Then $G_t=\Circ(n,S_t)$ is doubly saturated $R(3,t)$-good.  More
precisely,
\begin{enumerate}
\item $G_t$ is triangle-free.
\item $\alpha(G_t)=t-1$.
\item Adding any nonedge to $G_t$ creates a triangle.
\item Deleting any edge from $G_t$ creates an independent set of order $t$.
\end{enumerate}
\end{theorem}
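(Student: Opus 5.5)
Write $t=2m+1$, so $n=10m-5$ and
\[
S_t=\{2m-3,\,2m-2\}\cup\iz{2m+2}{3m-3}\cup\{3m-1,\,4m-2\}.
\]
Let $D=\pm S_t\subseteq\Zn$ be the symmetric connection set, so $x\sim y$ iff $y-x\in D$. All four parts reduce to statements about $D$ and its sumset $D+D$. The plan is to dispose of the three "local" properties (1), (3), (4) by sumset computations and explicit witnesses, and then spend most of the effort on the global bound $\alpha(G_t)\le t-1$.

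\textbf{Triangle-freeness and saturation under adding edges.} For (1), it suffices to show $0\notin D+D+D$, i.e.\ $(D+D)\cap D=\emptyset$. Sums of two elements of $S_t$ are computed interval by interval:
\begin{itemize}
\item $[2m-3,2m-2]+[2m+2,3m-3]=[4m-1,5m-5]$;
\item $[2m+2,3m-3]+[2m+2,3m-3]=[4m+4,6m-6]$, which reduces mod $n$ to distances in $[4m+1,5m-5]$;
\item the singletons $3m-1$ and $4m-2$ combine similarly.
\end{itemize}
Differences are handled the same way. One then checks that every resulting least circular distance avoids $S_t$.

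For (3), I would prove the cyclic sumset identity $\norm{D+D}=\iz{1}{\lfloor n/2\rfloor}\setminus S_t$, meaning every non-edge distance $d\notin S_t\cup\{0\}$ is the sum or difference of two connection distances. Then adding the edge $\{0,d\}$ creates a triangle $0,a,d$ with $a,d-a\in D$. This is a finite bookkeeping over the complementary intervals $[1,2m-4]$, $[2m-1,2m+1]$, $\{3m-2\}$, $[3m,4m-3]$ and $[4m-1,5m-3]$, each covered by one of the sum or difference interval families.

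\textbf{Saturation under deleting edges.} For (4), by vertex-transitivity it suffices, for each $s\in S_t$, to exhibit an independent set $I_s$ of size $t-2$ in $G_t$ such that $I_s$ contains no neighbour of $0$ or of $s$. Then $I_s\cup\{0,s\}$ is independent of order $t$ once the edge $\{0,s\}$ is removed. I would build $I_s$ from arithmetic progressions with small step, say blocks of consecutive residues of length less than $2m-3$, which are automatically independent. They are placed in the gaps of $N(0)\cup N(s)$ and chosen with parameters affine in $m$ so that a single verification covers all $m\ge 8$. The interval $[2m+2,3m-3]$ of values of $s$ needs a witness depending affinely on $s$ as well.

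\textbf{The independence bound, which is the main obstacle.} The lower bound $\alpha\ge t-1$ follows from (4), since a $t$-set minus one vertex is independent. The hard part is $\alpha\le t-1=2m$.

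Since $[1,2m-4]\cap\norm{D}=\emptyset$, short runs are allowed. The natural first move is a discharging or averaging argument: for an independent set $I$, consider the density of $I$ in windows of length $L$ around the cycle. Consecutive gaps of $I$ are constrained because the distances $2m-3,2m-2$ and the block $[2m+2,3m-3]$ are forbidden. I would classify $I$ by its intersection with a window of length $2m-3$ anchored at an element of $I$. A five-layer case reduction on which forbidden distances constrain the next window would then give an inequality $|I\cap W|+|I\cap W'|\le$ const for suitable shifted windows. Summing a weighted combination of such window inequalities around $\Zn$, with weights affine in $m$ (a linear-programming style certificate), should yield $|I|\le 2m$ uniformly for large $m$, say $m\ge30$.

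The small cases $8\le m\le 29$, where the affine weights fail to be nonnegative, would be settled by an exhaustive search exploiting rotation and reflection symmetry to fix $0\in I$. Finding the right window structure and the certificate is where I expect the real difficulty, and I would first try weights guided by LP duals computed numerically for moderate $m$.
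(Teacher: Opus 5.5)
\textbf{The gap is the upper bound $\alpha(G_t)\le 2m$.} Your treatment of (1) and (3) is the paper's sumset identity $C+C=\Zn\setminus C$. There is one harmless slip: $\iz{2m+2}{3m-3}+\iz{2m+2}{3m-3}$ circularizes to $\iz{4m+1}{5m-3}$, not $\iz{4m+1}{5m-5}$. Your strategy for (4) is also the paper's. However, you have not produced the witnesses, and the eight explicit affine families of Appendix~\ref{app:deletion} are the whole content of that part.

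For $\alpha(G_t)\le 2m$ you offer only a heuristic, and as described it cannot work. Summing translates of a weighted window inequality $\sum_{v\in U}w_v\mathbf 1[v+g\in I]\le c_w$ over all $g\in\Zn$ gives only $\card I\le n\,c_w/\sum_v w_v$. The best such bound over weightings supported on $U$ is $\card I\le n/\chi_f(G_t[U])$. Since $G_t$ is vertex-transitive with $\alpha(G_t)=2m$, we have $\chi_f(G_t[U])\le\chi_f(G_t)=n/(2m)=5-5/(q+1)$, where $q=2m-1$. To conclude $\card I\le 2m$ you would therefore need a window with $\chi_f(G_t[U])>n/(2m+1)=5-10/(q+2)$, that is, within $O(1/m)$ of $5$. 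Windows of bounded size cannot achieve this once $m$ is large. For windows growing with $m$, you would have to compute their weighted independence numbers essentially exactly, which is the original problem in disguise. Put differently, $2m=n/5+1$ exceeds the natural density $n/5$ by an additive constant, and averaging arguments are poor at detecting that.

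\textbf{The missing idea is the fiber structure the paper exploits.} With $n=5q$, the distance $4m-2=2q\in S$ makes each fiber $\{r+qi:i\in\Z_5\}$ induce a $5$-cycle. The remaining $2m-2=q-1$ signed connections lie over distinct nonzero residues mod $q$. So any two fibers $r<s$ are joined by a single perfect matching with layer shift $f(s-r)$ (Lemma~\ref{lem:fibermodel}). Hence each fiber holds at most two points of an independent set $A$, $\card A=q+D-E$, and $\alpha\le 2m$ becomes the combinatorial inequality $D\le E+1$.

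The paper then translates one double fiber to $B_0=\{0,1\}$ and passes to the three-layer graph $H_m$ of Lemma~\ref{lem:normalized}. It proves $D_H\le E_H$ by showing, with packs of disjoint unsatisfiable cores, that a double at $a$ forces $\ell(a)$ empty fibers, and finishes with a residual table. That argument covers $m\ge30$, and an exhaustive checker covers $8\le m\le29$. Your ``five-layer case reduction'' never identifies these fibers or the identity $\card A=q+D-E$, so the central step of the theorem remains unproved.

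Incidentally, $\alpha\ge 2m$ needs no deletion witnesses. $G_t$ is $2m$-regular and triangle-free, so every neighbourhood is an independent set of order $2m$.
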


Przybocki et al. computationally verified the construction for every odd
$t\le63$.  Our proof uses computer assistance, but the only
parameter-by-parameter exhaustive computation occurs for $8\le m\le29$.
All other finite checks are transported over affine parameter ranges.  An
additive identity in $\Zn$ proves both triangle-freeness and maximal
triangle-freeness.  The independence-number argument writes $n=5q$ and
views the graph as $q$ fibers with five layers.  After one doubled fiber is
normalized, a finite-state certificate forces at least as many empty fibers
as remaining doubled fibers.  Finally, deletion-criticality follows from
eight explicit families of independent-set witnesses.  Appendix
\ref{app:certificate} records the certificate data and Appendix
\ref{app:deletion} records all deletion witnesses.

The argument has also been formalized in Lean~4.32.2 using
Mathlib~\cite{deMouraUllrich2021,Mathlib2020}.  Section~\ref{sec:formal}
describes the formal statement, the executable certificates, and the
precise trust boundary.

\section{The cyclic sumset identity}\label{sec:sumset}

Write
\[
 t=2m+1,\qquad m\ge8,\qquad q=2m-1,
 \qquad n=10m-5=5q.
\]
The positive connection distances become
\begin{equation}\label{eq:S}
 S=\{2m-3,2m-2\}\cup\iz{2m+2}{3m-3}
     \cup\{3m-1,4m-2\}.
\end{equation}
For $x\in\Zn$, let $\norm{x}$ be its least circular distance from $0$, and
let
\[
 C=\{x\in\Zn:\norm{x}\in S\}=S\cup(-S)
\]
be the signed connection set.  Thus $x$ and $y$ are adjacent if and only
if $y-x\in C$.

\begin{lemma}[Sumset identity]\label{lem:sumset}
For every $m\ge8$,
\begin{equation}\label{eq:sumset}
 C+C=\Zn\setminus C.
\end{equation}
\end{lemma}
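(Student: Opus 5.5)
The plan is to prove \eqref{eq:sumset} by computing $C+C$ explicitly as a finite union of cyclic intervals whose endpoints are affine in $m$, and comparing it with the complement of $C$, which is also a finite union of intervals with affine endpoints. First I will describe $C$ concretely as a subset of $\iz{0}{n-1}$. Since $n-S$ consists of $6m-3$, $7m-4$, $\iz{7m-2}{8m-7}$, $8m-3$ and $8m-2$, the set $C$ is the union of ten pieces: six singletons and two intervals on each side.

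Since $n/2=5m-\tfrac52$, the possible norms are $\iz{1}{5m-3}$. The complement of $C$ is therefore the set of $x$ with $x=0$ or
\[
 \norm{x}\in \iz{1}{2m-4}\cup\{2m-1,2m,2m+1\}\cup\{3m-2\}\cup\iz{3m}{4m-3}\cup\iz{4m-1}{5m-3}.
\]
Because $C=-C$, the set $C+C$ is also symmetric. Hence both sides of \eqref{eq:sumset} are determined by which norms they contain, and I can work entirely with norms. I will prove the two inclusions separately.

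For $C+C\subseteq\Zn\setminus C$, note that this is exactly triangle-freeness. It suffices to check that for $a,b\in S$ neither $a+b$ nor $a-b$ has norm in $S$, since every sum in $C+C$ is $\pm(a+b)$ or $\pm(a-b)$. I split $S$ into its five blocks: $P=\{2m-3,2m-2\}$, $I=\iz{2m+2}{3m-3}$, $\{3m-1\}$ and $\{4m-2\}$. For each of the $15$ unordered pairs of blocks, $a+b$ and $|a-b|$ range over an interval, or a few points, with affine endpoints. I reduce these modulo $n$ to norms, and then verify disjointness from $S$ by comparing linear forms in $m$. For example, $I+I=\iz{4m+4}{6m-6}$ has norms in $\iz{4m+4}{5m-3}$ together with $\iz{4m+1}{5m-3}$, which avoids $S$. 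Also $|I-I|\subseteq\iz{0}{m-5}$ and $I+\{4m-2\}=\iz{6m}{7m-5}$, which has norms in $\iz{3m}{4m-5}$, and so on. Each such comparison is an inequality of the form $\alpha m+\beta<\gamma m+\delta$, and it holds for all $m\ge8$ once it holds at $m=8$ and the slopes have the right order.

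For the reverse inclusion, I must exhibit every complement norm as $\norm{a\pm b}$ with $a,b\in S$, and $0=a-a$. I expect the main sources to be these:
\begin{itemize}
 \item $|I-I|\cup|I-P|$ covers small norms up to about $m$.
 \item $I-P$ and $\{4m-2\}-I$ fill $\iz{m}{2m-4}$.
 \item Sums such as $P+P$, and differences such as $\{4m-2\}-P$ and $\{4m-2\}-\{2m-3\}$, supply the isolated points $2m-1$, $2m$, $2m+1$ and $3m-2$.
 \item $I+P$ reaches $\iz{4m-1}{5m-5}$.
 \item The wrapped sums $I+I$ and $I+\{4m-2\}$ cover $\iz{3m}{4m-3}$ and the top of $\iz{4m-1}{5m-3}$.
\end{itemize}
I will list, for each complement interval, one explicit pair of blocks whose sumset or difference set covers it. I will verify coverage by endpoint inequalities, again affine in $m$.

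The main obstacle is bookkeeping rather than any idea. Specifically, I must handle wrap-around correctly: sums exceeding $n/2$ must be replaced by $n$ minus the sum, which reverses intervals. I must also make sure that the covering intervals abut with no gap at the boundary points $2m-4$, $2m-1$, $3m-2$, $3m$, $4m-3$, $4m-1$ and $5m-3$. The hypothesis $m\ge8$ is used to guarantee that $I$ has at least a few elements (it has $m-4$ of them), so that intervals like $I+P$ really are long enough to overlap. It also guarantees that no affine inequality flips. I would organize the verification as a table of block pairs against norm intervals, so that each entry is a single linear check.
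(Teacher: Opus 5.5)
Your plan is the paper's proof: use $C=-C$ and the oddness of $n$ to reduce to norms, tabulate $|a-b|$ and $\norm{a+b}$ over the blocks $P$, $I$, $\{3m-1\}$, $\{4m-2\}$ (four blocks, hence ten pairs), and check that the union of these norm sets is exactly $\iz{0}{5m-3}\setminus S$. Several of your forecast covering sources are wrong, though the full table you plan to compute will correct them: $P+P$ has norms $\iz{4m-6}{4m-4}$ and supplies no isolated point; $2m-1$ and $3m-2$ arise only from the wrapped sums $2(4m-2)=8m-4$ and $(3m-1)+(4m-2)=7m-3$; and since $I+I$ and $I+\{4m-2\}$ have norms $\iz{4m+1}{5m-3}$ and $\iz{3m}{4m-5}$, the points $4m-4$ and $4m-3$ must come from $P+P$ and $2(3m-1)=6m-2$.
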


\begin{proof}
Put
\[
 P=\{2m-3,2m-2\},\quad I=\iz{2m+2}{3m-3},
 \quad c=3m-1,\quad s=4m-2.
\]
Every sum of two signed members of $C$ has circular distance either
$|a-b|$ or $\norm{a+b}$ for $a,b\in S$.  Direct interval subtraction gives
\[
\begin{array}{c|c@{\qquad}c|c}
\toprule
\text{pieces}&|a-b|&\text{pieces}&|a-b|\\
\midrule
P-P&\{0,1\}&I-I&\iz{0}{m-5}\\
I-P&\iz{4}{m}&c-P&\{m+1,m+2\}\\
c-I&\iz{2}{m-3}&s-P&\{2m,2m+1\}\\
s-I&\iz{m+1}{2m-4}&s-c&\{m-1\}\\
\bottomrule
\end{array}
\]
and hence
\begin{equation}\label{eq:diffunion}
 \{|a-b|:a,b\in S\}=\iz{0}{2m-4}\cup\{2m,2m+1\}.
\end{equation}
Circularizing the ordinary positive sums gives
\[
\begin{array}{c|c@{\qquad}c|c}
\toprule
\text{pieces}&\norm{a+b}&\text{pieces}&\norm{a+b}\\
\midrule
P+P&\iz{4m-6}{4m-4}&P+I&\iz{4m-1}{5m-5}\\
P+c&\{5m-4,5m-3\}&P+s&\{4m-1,4m\}\\
I+I&\iz{4m+1}{5m-3}&I+c&\iz{4m-1}{5m-6}\\
I+s&\iz{3m}{4m-5}&c+c&\{4m-3\}\\
c+s&\{3m-2\}&s+s&\{2m-1\}\\
\bottomrule
\end{array}
\]
and therefore
\begin{equation}\label{eq:sumunion}
 \{\norm{a+b}:a,b\in S\}
 =\{2m-1,3m-2\}\cup\iz{3m}{4m-3}
  \cup\iz{4m-1}{5m-3}.
\end{equation}
The union of~\eqref{eq:diffunion} and~\eqref{eq:sumunion} is precisely
$\iz{0}{5m-3}\setminus S$.  Since $n$ is odd, circular distance
parametrizes the inverse pairs of $\Zn$, proving~\eqref{eq:sumset}.
\end{proof}

\begin{proposition}\label{prop:triangle}
The graph $G_t$ is triangle-free, and adding any nonedge creates a
triangle.
\end{proposition}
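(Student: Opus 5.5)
Both assertions follow directly from Lemma~\ref{lem:sumset}, read in its two inclusions $C+C\subseteq\Zn\setminus C$ and $\Zn\setminus C\subseteq C+C$. Throughout we use that $G_t$ is a Cayley graph on $\Zn$ with the symmetric connection set $C=-C$. By definition of $S$ we have $0\notin C$, so there are no loops and every element of $\Zn\setminus C$ other than $0$ corresponds to a nonedge.

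For triangle-freeness, suppose $x,y,z$ form a triangle. Put $a=y-x$ and $b=z-y$. Then $a,b\in C$, and also $z-x=a+b\in C$. This contradicts the inclusion $C+C\subseteq \Zn\setminus C$, which says exactly that $(C+C)\cap C=\emptyset$. Hence $G_t$ has no triangle.

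For maximal triangle-freeness, let $xz$ be a nonedge with $x\neq z$, so $d:=z-x\in\Zn\setminus C$ and $d\neq0$. By the reverse inclusion $\Zn\setminus C\subseteq C+C$, we can write $d=a+b$ with $a,b\in C$. Put $y=x+a$. Then $y-x=a\in C$ and $z-y=d-a=b\in C$, so $y$ is adjacent to both $x$ and $z$. We must still check that $y$ is a third vertex. We have $y\ne x$ because $a\neq0$, since $0\notin C$. We have $y\ne z$ because $b\ne0$. Thus adding the edge $xz$ creates the triangle $xyz$.

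There is no real obstacle here, since all the content sits in Lemma~\ref{lem:sumset}. The only bookkeeping points are two. First, $0\notin C$, which holds because $S\subseteq\iz{1}{\lfloor n/2\rfloor}$. Second, adjacency in $\Circ(n,S)$ is correctly encoded by membership of differences in $C$, which uses $C=-C$, so the direction of the difference is irrelevant.
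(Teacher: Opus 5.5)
Your proof is correct and follows the paper's argument essentially verbatim. Triangle-freeness comes from $(C+C)\cap C=\varnothing$, and maximality comes from writing a nonedge difference as $a+b$ with $a,b\in C$ and taking the middle vertex $x+a$, which is distinct from both endpoints because $0\notin C$. The only cosmetic difference is that the paper phrases the triangle case as $a+b+c=0$ with $a,b,c\in C$, giving $a+b=-c\in C$.
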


\begin{proof}
A triangle would yield $a,b,c\in C$ with $a+b+c=0$, whence
$a+b=-c\in C$, contrary to Lemma~\ref{lem:sumset}.  Now let distinct
$x,y$ be nonadjacent.  Then $y-x\notin C$, so~\eqref{eq:sumset} gives
$y-x=a+b$ with $a,b\in C$.  The vertex $z=x+a=y-b$ is adjacent to both
$x$ and $y$.  Since $0\notin C$ and $a,b\in C$, this vertex is distinct
from both endpoints.  Thus adding $xy$ creates the triangle $xyz$.
\end{proof}

\section{The five-layer reduction}\label{sec:fibers}

Every $x\in\Z_n$ has a unique representation
\begin{equation}\label{eq:fibercoord}
 x=r+qi,\qquad 0\le r<q,\quad i\in\Z_5.
\end{equation}
For an independent set $A\subseteq\Z_n$, define its state in fiber $r$ by
\[
 B_r=\{i\in\Z_5:r+qi\in A\}.
\]

\begin{lemma}[Fiber model]\label{lem:fibermodel}
Each $B_r$ has at most two elements.  If $\card{B_r}=2$, then
$B_r=P_a:=\{a,a+1\}$ for some $a\in\Z_5$.  Moreover, for $0<d<q$ define
\begin{equation}\label{eq:gain}
 f(d)=
 \begin{cases}
 -1,&d=1,2,\\
  1,&3\le d\le m-2,\\
 -2,&d=m-1,\\
  1,&d=m,\\
 -2,&m+1\le d\le q-3,\\
  0,&d=q-2,q-1.
 \end{cases}
\end{equation}
Then, whenever $0\le r<s<q$,
\begin{equation}\label{eq:compat}
 B_s\cap\bigl(B_r+f(s-r)\bigr)=\varnothing.
\end{equation}
\end{lemma}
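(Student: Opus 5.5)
The plan is to read off both assertions directly from the connection set $C$ of \eqref{eq:S}. In the fiber coordinates \eqref{eq:fibercoord}, two vertices $r+qi$ and $s+qj$ are adjacent exactly when $(s-r)+q(j-i)\in C$. So everything reduces to computing the circular norm $\norm{d+qk}$ for $0\le d<q$ and $k\in\Z_5$, and checking whether it lies in $S$. I expect no real obstacle; the work is a short case check.

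\emph{Within a fiber} ($d=0$). We have $\norm{q}=2m-1\notin S$ and $\norm{2q}=4m-2\in S$. Hence the vertices of fiber $r$ induce the graph on $\Z_5$ in which $i\sim i+2$, which is a $5$-cycle. Its independent sets have at most two elements, and the two-element ones are exactly the pairs $\{a,a+1\}$ not joined by a step of $2$. This gives the first two claims of Lemma~\ref{lem:fibermodel}.

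\emph{Between fibers.} For \eqref{eq:compat} it suffices to show the following. Whenever $0<d=s-r<q$, the vertex $r+qi$ is adjacent to $s+q(i+f(d))$, that is, $d+qf(d)\in C$. Then an element of $B_s\cap(B_r+f(d))$ would give two adjacent vertices of the independent set $A$. I would verify $d+qf(d)\in C$ range by range, using $q=2m-1$:
\begin{itemize}
\item For $d=1,2$: $d-q=-(2m-1-d)$, of norm $2m-2$ or $2m-3$, which lies in $\{2m-3,2m-2\}$.
\item For $3\le d\le m-2$: $d+q\in\iz{2m+2}{3m-3}$.
\item For $d=m-1$: $d-2q=-(3m-1)$.
\item For $d=m$: $d+q=3m-1$.
\item For $m+1\le d\le 2m-4=q-3$: $d-2q=d-4m+2$ has absolute value in $\iz{2m+2}{3m-3}$.
\item For $d=q-2,q-1$: $d=2m-3,2m-2$ lies directly in $S$.
\end{itemize}
In each case the absolute value is at most $5m-3<n/2$, so it equals the circular norm. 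The only care needed is this sign and range bookkeeping, which confirms that each case lands in $S$.
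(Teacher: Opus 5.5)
Your proof is correct and follows the paper's argument. The within-fiber $5$-cycle comes from $2q=4m-2\in S$ and $q=2m-1\notin S$, and your range-by-range check that $d+qf(d)\in C$ is exactly the paper's table of signed lifts. The paper also notes that these lifts, together with $\pm2q$, exhaust $C$, so \eqref{eq:compat} is exactly the inter-fiber adjacency condition; the lemma as stated only needs the one direction you prove.
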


\begin{proof}
The connection $2q=4m-2\in S$ joins two layers in the same fiber exactly
when their difference is $\pm2$ modulo $5$.  A fiber therefore induces a
$5$-cycle, whose independent sets have size at most two and whose
two-element independent sets are the consecutive pairs $P_a$.

For the integer representative $1\le d\le q-1$, the following table gives
the unique signed connection above residue $d$ modulo $q$:
\begin{equation}\label{eq:signedlifts}
\begin{array}{c|c}
\toprule
d&d+qf(d)\in C\\
\midrule
1&-(2m-2)\\
2&-(2m-3)\\
3\le d\le m-2&d+q\in\iz{2m+2}{3m-3}\\
m-1&-(3m-1)\\
m&3m-1\\
m+1\le d\le q-3&d-2q\in\iz{-(3m-3)}{-(2m+2)}\\
q-2,\ q-1&2m-3,\ 2m-2\\
\bottomrule
\end{array}
\end{equation}
Indeed, these rows exhaust the $q-1$ inter-fiber signed connections, while
$\pm(4m-2)=\pm2q$ are the two connections above residue zero.  Thus there
is exactly one matching between two distinct fibers at residue difference
$d$, with layer displacement $f(d)$.  Avoiding that matching is exactly
\eqref{eq:compat}.
\end{proof}

Let $D$ and $E$ denote the numbers of double and empty fibers.  Every
other fiber contains one point, so
\begin{equation}\label{eq:count}
 \card A=q+D-E.
\end{equation}
Thus it is enough to prove $D\le E+1$.

If $D>0$, choose a double $B_r=P_a$ and translate $A$ by $-r-qa$.  This
graph automorphism normalizes that double to
\begin{equation}\label{eq:base}
 B_0=\{0,1\}.
\end{equation}
In every other fiber $r$, the base double excludes layers $f(r)$ and
$f(r)+1$.  Relabel the three surviving layers
\[
 f(r)+2,\qquad f(r)+3,\qquad f(r)+4
\]
as $0,1,2$.  Define $H_m$ on the vertex set
\[
 V(H_m)=\{(r,z):1\le r<q,\ z\in\{0,1,2\}\},
\]
where $(r,z)$ represents the surviving layer $f(r)+2+z$ in fiber $r$.
Write $z_r$ for $(r,z)$.  Adjacency in $H_m$ is inherited from $G_t$.
For $1\le r<s<q$, put
\begin{equation}\label{eq:h}
 h(r,s)=f(s-r)-f(s)+f(r)\pmod5.
\end{equation}

\begin{lemma}[Normalized adjacency]\label{lem:normalized}
For $m\ge8$ one has $h(r,s)\in\{1,2,4\}$.  Between fibers $r<s$, the
edges of $H_m$ are
\begin{equation}\label{eq:Hedges}
\begin{array}{c|c}
\toprule
h(r,s)&\text{edges}\\
\midrule
1&0_r1_s,\ 1_r2_s\\
4&1_r0_s,\ 2_r1_s\\
2&0_r2_s\\
\bottomrule
\end{array}
\end{equation}
and the only within-fiber edge is $0_r2_r$.
\end{lemma}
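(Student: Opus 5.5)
The plan is to read the lemma directly off the compatibility relation~\eqref{eq:compat} of Lemma~\ref{lem:fibermodel}, after rewriting it in the relabeled coordinates.

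\emph{Within-fiber edges.} By the proof of Lemma~\ref{lem:fibermodel}, layers $i,j$ in one fiber are adjacent exactly when $j-i\equiv\pm2\pmod5$. The three surviving layers $f(r)+2,\,f(r)+3,\,f(r)+4$ have pairwise differences $1$, $1$ and $2$. Hence the only within-fiber edge is $0_r2_r$.

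\emph{Edges between fibers.} Take fibers $r<s$. By Lemma~\ref{lem:fibermodel}, layer $i$ of fiber $r$ is adjacent to layer $j$ of fiber $s$ exactly when $j\equiv i+f(s-r)\pmod5$; this is the unique matching at residue difference $s-r$. Substitute $i=f(r)+2+z$ and $j=f(s)+2+w$. The condition becomes $w\equiv z+h(r,s)\pmod5$, with $z,w\in\{0,1,2\}$. Now list the solutions for each value of $h=h(r,s)$:
\begin{itemize}
\item[] $h=1$ gives $(z,w)=(0,1),(1,2)$, i.e.\ the edges $0_r1_s$ and $1_r2_s$;
\item[] $h=4\equiv-1$ gives $(1,0),(2,1)$, i.e.\ $1_r0_s$ and $2_r1_s$;
\item[] $h=2$ gives only $(0,2)$, i.e.\ $0_r2_s$.
\end{itemize}
This is table~\eqref{eq:Hedges}. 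The two excluded values would behave differently: $h=3$ would give the single edge $2_r0_s$, and $h=0$ would give three edges. So the only real content of the lemma is the claim $h(r,s)\in\{1,2,4\}$, which excludes exactly $h\equiv0,3\pmod5$.

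\emph{The main obstacle: proving $h(r,s)\in\{1,2,4\}$.} This is a finite case analysis using the piecewise definition~\eqref{eq:gain}, and I expect it to be the bulk of the work.

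The domain $\{1,\dots,q-1\}$ splits into seven intervals:
\[
\{1,2\},\quad [3,m-2],\quad \{m-1\},\quad \{m\},\quad [m+1,q-3],\quad \{q-2,q-1\}.
\]
On these intervals $f$ is constant with values in $\{-1,1,-2,0\}$, which are $\{4,1,3,0\}$ modulo $5$. I would case on the interval containing $r$, the interval containing $s$, and the interval containing $d=s-r$. Many triples are impossible: since $r\ge1$ we have $d<s$, and since $d=s-r$ the three positions must be consistent. For each triple that can occur, I would compute
\[
h=f(d)-f(s)+f(r)\pmod5
\]
and check that it lies in $\{1,2,4\}$.

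To organise the enumeration, I would fix the interval of $d$ (seven cases). For each, I would list which pairs of intervals $(r,s)$ satisfy $s=r+d$. For example, if $d\in[3,m-2]$ then $f(d)=1$, and we need $f(r)-f(s)\in\{0,1,3\}\pmod5$ for every admissible pair. Pairs such as $f(r)=-2$, $f(s)=0$ would give $h\equiv4$, which is fine. The danger is pairs like $f(r)=0$, $f(s)=1$ (giving $h\equiv0$), or $f(r)=-1$, $f(s)=0$ together with $f(d)=-2$ (giving $h\equiv3$). These must be shown impossible from the interval arithmetic, e.g.\ $r\ge q-2$ forces $s\ge q-1$, which leaves almost no room. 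The hypothesis $m\ge8$ guarantees that every interval has its stated shape, so the case pattern is uniform in $m$. I would verify each surviving case by hand and back the enumeration with the same affine-range finite check used elsewhere in the paper.
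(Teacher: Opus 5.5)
Your proposal is correct and follows the paper's own route. You rewrite the compatibility condition as $w\equiv z+h(r,s)\pmod5$ to read off the edge table, and restrict the $5$-cycle to obtain $0_r2_r$. You then exclude $h\equiv0,3$ by a branch-by-branch interval check on $f$, and that check does succeed for $m\ge8$.

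There is one slip in your illustration: $f(r)=-1$, $f(s)=0$, $f(d)=-2$ gives $h\equiv-3\equiv2$, not $3$. This configuration genuinely occurs, for example at $(r,s)=(1,q-2)$, which is one of the listed $h=2$ pairs, so it is a case to compute rather than one to rule out. Also, $f$ has six branches, not seven.
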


\begin{proof}
Substitute the six branches of~\eqref{eq:gain} into~\eqref{eq:h}.  The
values $0$ and $3$ are excluded by the corresponding interval
inequalities.  Substitution in~\eqref{eq:compat} then gives
Table~\eqref{eq:Hedges}.  The within-fiber assertion is the restriction of
the original $5$-cycle to the three surviving layers.  This finite symbolic
case analysis is verified uniformly in the formal development.
\end{proof}

The two possible doubles in $H_m$ are consequently
\[
 \text{type }0:\{0,1\},\qquad
 \text{type }1:\{1,2\}.
\]
Let $D_H$ and $E_H$ be the numbers of double and empty nonbase fibers.
Restoring the base fiber gives
\begin{equation}\label{eq:normalizedcount}
 D=1+D_H,\qquad E=E_H,\qquad
 \card A=q+1+D_H-E_H.
\end{equation}
The central claim is therefore the following.

\begin{proposition}[Normalized hole inequality]\label{prop:hole}
For every $m\ge8$, every independent configuration in $H_m$ satisfies
\begin{equation}\label{eq:hole}
 D_H\le E_H.
\end{equation}
\end{proposition}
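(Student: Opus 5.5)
The plan is a discharging (weighting) argument along the fiber order $1,\dots,q-1$, made uniform in $m$ by an affine certificate.

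\textbf{Step 1: make $h$ explicit.} Using~\eqref{eq:gain} and~\eqref{eq:h}, I would tabulate $h(r,s)$ on the finitely many regions cut out by the breakpoints $1,2,3,m-2,m-1,m,m+1,q-3,q-2,q-1$. These are applied both to $r$, to $s$, and to $s-r$. On each region $h$ is constant. So $H_m$ is described by a bounded number of ``blocks'' whose endpoints are affine in $m$, and each block carries one of the three edge patterns of~\eqref{eq:Hedges}.

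\textbf{Step 2: find the charging mechanism.} Consider a double fiber $s$. If it has type~$0$, i.e.\ $\{0_s,1_s\}$, then an earlier fiber $r$ with $h(r,s)=1$ may not contain $0_r$ or $1_r$, so its only possible vertex is $2_r$. An earlier fiber with $h(r,s)=4$ is forced to $\{0_r\}$, and one with $h(r,s)=2$ is forced to avoid $0_r$. The type~$1$ double behaves symmetrically. A double therefore pins down every other fiber to a restricted state. Two doubles $s<s'$ whose pinned states conflict on some intermediate fiber $r$ then force $r$ to be empty.

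The goal is an injection, or more generally a fractional charging, from the doubles to the empty fibers. The key local claim to verify is this: between any two consecutive doubles, or between the base fiber~$0$ (already a double, normalized by~\eqref{eq:base}) and the first double, there is at least one empty fiber. Here the base double is already accounted for in~\eqref{eq:normalizedcount}, so the fibers $1,\dots,q-1$ must supply one empty fiber per nonbase double. I would try first to charge each double $s$ to the nearest empty fiber before it. This should work because $f(1)=f(2)=-1$ makes short gaps behave like a path in which consecutive doubles force a hole. If that fails, I would instead model the sequence of states $B_1,\dots,B_{q-1}$ as a walk in a finite automaton. Its states would record the type of the last double and the recent history, and each step would add the weight $[\text{double}]-[\text{empty}]$. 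The claim then becomes that no walk has positive total weight.

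\textbf{Step 3: uniformity in $m$.} For $m\ge30$, every region from Step~1 has length growing linearly in $m$. The automaton and potential-function certificate can then be written with affine parameters: a potential value at each state, together with inequalities checked once symbolically. This is the ``uniform affine certificate'' promised in the introduction. For $8\le m\le29$, some regions are short or degenerate and interact irregularly. There I would run an exhaustive dynamic program over independent configurations of $H_m$ that maximizes $D_H-E_H$, and check that the maximum is $0$.

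\textbf{Main obstacle.} The hard part is the long-range constraints: edges with $h(r,s)$ for $s-r$ in the middle ranges $3\le d\le m-2$ and $m+1\le d\le q-3$. Because of them, independence is not a local, path-like condition, so a finite-memory automaton must be justified. I expect to need a lemma showing that each double forces, via~\eqref{eq:Hedges}, a whole affine interval of fibers into a single allowed state. Once that holds, only the block structure matters and a finite state space suffices. Verifying that lemma across all cases of~\eqref{eq:gain} is where I expect the real work, and the computer certificate, to lie.
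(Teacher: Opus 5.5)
There is a genuine gap. The charging rule of Step~2 is false, and the automaton fallback lacks the idea that would make a finite certificate possible.

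Your forced domains are miscomputed. For a type-$0$ double $\{0_s,1_s\}$ and an earlier fiber $r$, the case $h(r,s)=1$ has edges $0_r1_s$ and $1_r2_s$. Since $2_s$ is not in the double, only $0_r$ is excluded, and the domain is $\{1,2\}$, not $\{2\}$. The case $h(r,s)=2$ has the single edge $0_r2_s$ and excludes nothing.

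More seriously, your key claim is false: that a hole lies strictly between the base fiber and the first double, and between any two consecutive doubles. A double at fiber $1$ is admissible, and no fiber lies between $0$ and $1$. A sharper example: $h(1,2)=2f(1)-f(2)\equiv4$, so $1_0$ and $2_1$ are compatible. The holes they force are at fibers $m$ and $m+1$. There $1_0$ leaves the domains $\{0\}$ and $\{0,1\}$, while $2_1$ leaves $\{2\}$ in both. So the holes owed to a double can lie at distance about $m$ from it. For $1_0$ alone, the guaranteed hole is one of $m-2,m,m+2$. No ``nearest hole before it'' rule can account for this.

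The automaton fallback meets exactly the obstacle you flag, and the lemma you hope for does not remove it. Since $h(r,s)\in\{1,2,4\}$ for all $r<s$, every pair of fibers is joined by an edge of $H_m$. A fixed double leaves most fibers with domains such as $\{1,2\}$, $\{0,1\}$ or $\{0,1,2\}$, so those fibers still have choices. Holes arise from small sets of such fibers whose mutual edges are contradictory. In the core $\{m-2,m,m+2\}$ for $1_0$, the contradiction is the edge $2_{m-2}1_{m+2}$, internal to the core. Cores such as $\{1,3,a-4,a-2\}$ or $\{m-2,m-1,q-1\}$ span distances linear in $m$, so no bounded-memory scan in fiber order detects them.

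The paper's proof rests on two ideas missing from your plan. First, $D_H\le4$: same-type doubles are compatible only when $h(r,s)=2$, and those $21$ pairs form a bipartite graph between $L$ and $R$. Second, for each position $a$ there are $\ell(a)$ pairwise-disjoint unsatisfiable cores of at most five fibers each, so a double at $a$ forces $\ell(a)$ distinct holes. A residual enumeration of the few configurations whose doubles all have small $\ell(a)$ then reduces the case $m\ge30$ to finitely many affine truth tables.

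The same bound $D_H\le4$ is what makes the range $8\le m\le29$ tractable. A dynamic program over $H_m$ has no bounded-width structure to exploit. By contrast, the paper fixes at most four typed doubles and refutes every completion with $F-2d+1$ singleton fibers, which is a feasible exhaustive search.
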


Sections~\ref{sec:large} and~\ref{sec:small} prove this proposition in two
parameter ranges.

\section{The uniform affine certificate}\label{sec:large}

Assume throughout this section that $m\ge30$.  Two doubles of the same
type can coexist only if $h(r,s)=2$.  Exact interval arithmetic gives the
following stronger structural statement: every such pair has one endpoint
in
\begin{align*}
 L&=\{1,2,m-3,m-2,m,q-4,q-3\},\\
 R&=\{3,4,m-1,m+1,m+2,q-2,q-1\}
\end{align*}
and the other endpoint in the opposite set.  In particular, three doubles
of one type cannot be pairwise compatible.  Hence
\begin{equation}\label{eq:fourdouble}
 D_H\le4.
\end{equation}
The complete list of $21$ possible $h=2$ pairs is given in
Appendix~\ref{app:h2}.

Fix a typed double at a fiber $a$.  A set $K$ of other fiber indices is an
\emph{unsatisfiable core} if there is no selection of one vertex from every
fiber in $K$ whose union with the fixed double is independent in $H_m$.
Consequently, whenever an independent configuration contains that double,
at least one fiber indexed by $K$ is empty.  A pack of $p$ pairwise
disjoint cores therefore forces at least $p$ distinct empty fibers.

For example, fix the type-$0$ double at fiber $1$.  For
$K=\{m-2,m,m+2\}$, avoiding the fixed double leaves the respective layer
domains $\{1,2\}$, $\{0\}$, and $\{1,2\}$.  Choosing layer $0$ in the
middle fiber forces layers $2$ and $1$ in the two outer fibers, and those
two vertices are adjacent.  Hence $K$ is an unsatisfiable core.  All rows
in Appendix~\ref{app:cores} are checked in the same finite way.

The affine core table in Appendix~\ref{app:cores} proves that a double at
index $a$, of either type, forces at least $\ell(a)$ holes, where
\begin{equation}\label{eq:ell}
 \ell(a)=
 \begin{cases}
 1,&a\in\{1,2,q-2,q-1\},\\
 2,&a\in\{m-1,m\},\\
 3,&a\in\{m-2,m+1\},\\
 4,&\text{otherwise}.
 \end{cases}
\end{equation}
Each core has at most five fibers.  Once the affine order and gain branches
are fixed, its unsatisfiability is a truth table with at most $3^5$
assignments.  The certificate also verifies bounds, disjointness, and
coverage of every index for all $m\ge30$.  It does not instantiate a bounded
set of values of $m$.

Suppose for contradiction that $D_H=k>E_H$.  By~\eqref{eq:fourdouble},
$1\le k\le4$.  A double at $a$ would force $E_H\ge\ell(a)$, so every
double must satisfy $\ell(a)<k$.  The possible indices are therefore
\begin{equation}\label{eq:residualranges}
\begin{array}{c|c}
\toprule
k&\text{possible double indices}\\
\midrule
1&\varnothing\\
2&\{1,2,q-2,q-1\}\\
3&\{1,2,m-1,m,q-2,q-1\}\\
4&\{1,2,m-2,m-1,m,m+1,q-2,q-1\}.\\
\bottomrule
\end{array}
\end{equation}
For $k=2,3,4$, pairwise compatibility leaves respectively $12$, $14$,
and $1$ typed configurations.  The residual table in
Appendix~\ref{app:residual} gives $k$ distinct fibers whose layer domains
are empty in each case.  This contradicts $E_H<k$.  Hence
Proposition~\ref{prop:hole} holds for $m\ge30$.

\section{The finite parameter range}\label{sec:small}

It remains to prove Proposition~\ref{prop:hole} for $8\le m\le29$.  This
range is handled by an exhaustive Boolean checker whose soundness and
successful evaluation are both proved in Lean.

There are $F=2m-2$ normalized fibers.  For each type separately, the
checker enumerates all compatible triples of doubles and verifies that none
exists.  Hence $D_H\le4$.  For every $d\in\{1,2,3,4\}$ it then enumerates
every compatible typed configuration of $d$ doubled fibers.  It accepts
only if, for every such configuration, a recursive solver reports that no
completion exists using $F-2d+1$ distinct singleton fibers.  At each
remaining fiber, the recursion tries
each of the three layers and the option of leaving that fiber empty, so it
is exhaustive.

The soundness argument is short.  If $D_H=d$ and $E_H<d$, then there are
$F-d-E_H\ge F-2d+1$ singleton fibers, contradicting the negative completion
result.  In Lean, \path{completionSearch_eq_true_iff} proves that the
executable recursion is equivalent to its propositional specification, and
\path{smallChecker_sound} proves that a successful checker run implies
$D_H\le E_H$.  A compiled decision evaluates all $22$ values
$m=8,\ldots,29$.  This completes the proof of
Proposition~\ref{prop:hole}.

\begin{corollary}\label{cor:alpha}
For every $m\ge8$, $\alpha(G_t)=2m=t-1$.
\end{corollary}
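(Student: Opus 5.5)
Corollary~\ref{cor:alpha} asks for two inequalities, $\alpha(G_t)\le 2m$ and $\alpha(G_t)\ge 2m$. I will combine Proposition~\ref{prop:hole} with the counting identities of Section~\ref{sec:fibers} for the first, and exhibit an explicit independent set for the second.

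\emph{Upper bound.} Let $A$ be any independent set of $G_t$. By Lemma~\ref{lem:fibermodel}, every fiber state $B_r$ has at most two elements, and \eqref{eq:count} gives $\card A=q+D-E$. There are two cases.
\begin{itemize}
\item If $D=0$, then $\card A\le q=2m-1$.
\item If $D>0$, translate $A$ by the automorphism $x\mapsto x-r-qa$ to obtain the normalization~\eqref{eq:base}. Translation preserves sizes and fiber types. Each remaining fiber then lives in the three surviving layers, and the configuration is an independent configuration in $H_m$. Lemma~\ref{lem:normalized} justifies that adjacency is inherited correctly. Proposition~\ref{prop:hole} gives $D_H\le E_H$, so \eqref{eq:normalizedcount} yields $\card A=q+1+D_H-E_H\le q+1=2m$.
\end{itemize}
In both cases $\card A\le 2m$.

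\emph{Lower bound.} I need an independent set of size $2m$, which means a configuration with exactly one double fiber and no empty fibers. The natural candidate is a set of consecutive residues. Any two elements of an interval of length $2m$ in $\Z$ differ by at most $2m-1$. Since $n=10m-5$ and $2m-1<n/2$, their least circular distance is also at most $2m-1$. The smallest element of $S$ is $2m-3$, so the only distances in this range that could be edges are $2m-3$ and $2m-2$. That rules out a plain interval, so I will instead search for a union of a few intervals, or a single arithmetic progression, whose difference set avoids $S$.

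A simpler route is to build the configuration in fiber coordinates. Take $B_0=\{0,1\}$, and in every fiber $r\ge1$ choose one layer $z_r\in\{0,1,2\}$ so that no edge of Table~\eqref{eq:Hedges} is used. Constant $z_r=0$ fails whenever some pair has $h(r,s)=2$ together with a within-fiber edge, but $0_r2_s$ edges only matter for mixed layers. So if every $z_r=0$, the table shows no edge: for $h=1$ the edges are $0_r1_s$, for $h=4$ they are $1_r0_s$, and for $h=2$ it is $0_r2_s$, all of which involve a layer other than $0$ in one fiber. Hence choosing layer $f(r)+2$ in each fiber $r\ge1$, together with the base double, gives an independent set of size $1\cdot2+(q-1)=2m$. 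I will double-check this against~\eqref{eq:compat} directly, so that it does not depend on the normalization.

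\emph{Main obstacle.} The only genuine content is Proposition~\ref{prop:hole}, which is already established. The remaining risk is making sure the normalization and the relabelling exactly match the independence conditions, so that the all-zero witness is really independent in $G_t$. I expect that to be a routine recheck.
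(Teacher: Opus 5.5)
Your upper bound is the same as the paper's. The difference is in the lower bound.

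The paper notes that $\card S=m$ and $0<d<n/2$ for every $d\in S$, so $G_t$ is $2m$-regular. Proposition~\ref{prop:triangle} then makes every vertex neighborhood an independent set of order $2m$.

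You instead exhibit the constant configuration $B_0=\{0,1\}$ with $z_r=0$ for $1\le r<q$, that is, the set $\{0,q\}\cup\{r+q(f(r)+2):1\le r<q\}$. This is correct. Against the base double, \eqref{eq:compat} holds because $f(s)+2\notin\{f(s),f(s)+1\}$. For $1\le r<s$ it reduces exactly to $h(r,s)\not\equiv0\pmod 5$, which Lemma~\ref{lem:normalized} supplies. Your planned direct recheck against \eqref{eq:compat} is therefore this same condition, not an independent verification. Your sentence saying the constant choice ``fails'' in some case is muddled and should be removed, along with the abandoned interval attempt; no pair is ever violated.

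The trade-off is this. The paper's witness needs only the sumset identity and a degree count, whereas yours relies on the symbolic case analysis behind $h\in\{1,2,4\}$. In exchange, your argument stays inside the fiber model and shows that every constant choice $z_r\equiv z$ gives an independent set of order $2m$. The case $z=1$ is precisely $3q+C$, the neighborhood of the vertex $3q$, so the paper's witness is one of your three. The cases $z=0$ and $z=2$ give maximum independent sets that are not vertex neighborhoods.
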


\begin{proof}
If an independent set has no double fiber,~\eqref{eq:count} gives
$\card A\le q$.  Otherwise, Proposition~\ref{prop:hole} and
\eqref{eq:normalizedcount} give
$\card A\le q+1=2m$.  Conversely, $\card S=m$, and every $d\in S$ satisfies
$0<d<n/2$.  The connection residues $\pm d$ are therefore distinct, so
$G_t$ is $2\card S=2m$-regular.  By Proposition~\ref{prop:triangle}, the
neighborhood of any vertex is an independent set of order $2m$.
\end{proof}

\section{Deletion-criticality}\label{sec:deletion}

It remains to show that deleting any edge creates an independent set of
order $t$.  Translation and negation are automorphisms of $G_t$, so it is
enough to delete $\{0,d\}$ for $d\in S$.  Appendix~\ref{app:deletion}
defines, for each of the eight affine distance cases, a set
\[
 A_d\subseteq\Z_n,\qquad \card{A_d}=2m+1,\qquad 0,d\in A_d.
\]

\begin{proposition}[Deletion witnesses]\label{prop:deletion}
For every $m\ge8$ and $d\in S$, the only edge of $G_t[A_d]$ is
$\{0,d\}$.  Consequently, $A_d$ is independent after that edge is
deleted.
\end{proposition}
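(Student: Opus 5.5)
We will not see the witness sets $A_d$ until Appendix~\ref{app:deletion}, so the plan describes how the verification should go once they are written down. The eight affine distance cases should match the pieces of $S$ in~\eqref{eq:S}:
\begin{itemize}
\item[] $d=2m-3$ and $d=2m-2$;
\item[] $d$ in the interval $\iz{2m+2}{3m-3}$, probably split by sub-ranges (for instance near the ends versus generic);
\item[] $d=3m-1$ and $d=4m-2$.
\end{itemize}
In each case $A_d$ will be a union of a bounded number of arithmetic progressions, or intervals, in $\Zn$ whose endpoints are affine in $m$ and $d$. The natural starting point is the neighbourhood $N(0)=C$, which is independent of order $2m$ by Proposition~\ref{prop:triangle}. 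Each witness should be a perturbation of it: remove a few elements of $C$ adjacent to $d$ and add $0$, $d$ and a few compensating vertices. The exact shape is whatever the appendix specifies.

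The verification is a difference computation. Write $A_d=\bigcup_j X_j$ with each $X_j$ an affine interval or a small explicit set. The claim that $\{0,d\}$ is the only edge of $G_t[A_d]$ is equivalent to two statements about differences:
\begin{itemize}
\item[] for every pair $(j,k)$, the set $(X_j-X_k)\setminus\{0\}$ avoids $C$, except for the differences $\pm d$ realized by the pair $(0,d)$;
\item[] the difference $\pm d$ is realized by no other pair of elements of $A_d$.
\end{itemize}
For intervals, $X_j-X_k$ is again an interval with affine endpoints. Reducing to circular distance via $\norm{\cdot}$, as in the proof of Lemma~\ref{lem:sumset}, turns each check into interval-disjointness inequalities against the five pieces of $S$, namely $P$, $I$, $c$, $s$ and their negatives. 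These inequalities are linear in $m$ (and in $d$ for the interval case). They can be discharged uniformly for $m\ge8$ by comparing endpoints, once the wrap-around branch of $\norm{\cdot}$ is fixed. For the cardinality claim we add the lengths of the pieces and check disjointness, which gives $\card{A_d}=2m+1$.

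The main obstacle is the parametric case $d\in I$. There the witness depends on two parameters, and the difference intervals may cross a piece of $S$ or the wrap point $n/2$ depending on the relative sizes of $d$ and $m$. I would first split $I$ at the thresholds where some difference interval meets a boundary of $P$, $c$ or $s$. This is presumably the origin of the eight cases. Within each region every comparison then has a fixed sign. I would present one representative pair computation in detail and leave the remaining finitely many affine inequalities to the formal development, mirroring how Lemma~\ref{lem:normalized} was handled. Uniqueness of the edge $\{0,d\}$ also needs care: we must check that $d$ itself is not realized as a difference $x-y$ with $x,y\in A_d$ and $\{x,y\}\neq\{0,d\}$. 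I would handle this by listing which pairs of pieces have difference intervals containing $\pm d$ and confirming that only the singleton pieces $\{0\}$ and $\{d\}$ do.

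Finally, the consequence follows at once. After deleting $\{0,d\}$, the set $A_d$ is independent of order $2m+1=t$. By translation and negation invariance this covers every edge, which completes item (4) of Theorem~\ref{thm:main}.
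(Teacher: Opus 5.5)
Your plan is essentially the paper's proof. The paper takes integer representatives in $\iz{0}{n-1}$ and orders the at most twelve blocks of each $A_d$, so each block pair contributes the single interval $\iz{a_j-b_i}{b_j-a_i}$ of positive differences; it then checks $321$ affine containments in the nine bins complementary to $S\cup(n-S)$, with the pair $(0,d)$ as the sole exception, together with disjointness and the count $2m+1$, all machine-checked in Lean (this framing avoids the wrap-around split you anticipate with $\norm{\cdot}$). One aside in your proposal is false, though you never use it: since $0\in A_d$ and $\{0,d\}$ is the only edge, $A_d\cap C=\{d\}$, so the witnesses cannot be perturbations of $N(0)=C$.
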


\begin{proof}
Represent the vertices by integers in $\iz{0}{n-1}$.  For $x<y$, the pair
$\{x,y\}$ is an edge exactly when $y-x$ belongs to
\begin{align}
 F={}&S\cup(n-S)\notag\\
  ={}&\{2m-3,2m-2\}\cup\iz{2m+2}{3m-3}
       \cup\{3m-1,4m-2\}\notag\\
    &{}\cup\{6m-3,7m-4\}\cup\iz{7m-2}{8m-7}
       \cup\{8m-3,8m-2\}.\label{eq:forbidden}
\end{align}
The complement of $F$ in $\iz{1}{n-1}$ is the disjoint union
\begin{equation}\label{eq:bins}
\begin{gathered}
\iz{1}{2m-4},\quad \iz{2m-1}{2m+1},\quad \{3m-2\},\\
\iz{3m}{4m-3},\quad \iz{4m-1}{6m-4},\quad
\iz{6m-2}{7m-5},\\
\{7m-3\},\quad \iz{8m-6}{8m-4},\quad
\iz{8m-1}{10m-6}.
\end{gathered}
\end{equation}
Each $A_d$ is a disjoint union of at most twelve affine integer intervals
and singletons.  If $[a_i,b_i]$ precedes $[a_j,b_j]$, all positive
differences between these blocks form exactly
\begin{equation}\label{eq:blockdiff}
 \iz{a_j-b_i}{b_j-a_i}.
\end{equation}
Likewise, the internal positive differences form $\iz{1}{b_i-a_i}$.
Applying
\eqref{eq:blockdiff} to the displayed witnesses yields $321$ affine
containments in the nine bins~\eqref{eq:bins}.  The sole exception is the
difference $d$ from $0$ to $d$.  The same affine certificate checks that
the blocks are ordered and disjoint and that their total cardinality is
$2m+1$.  All containments are uniform in $m\ge8$ (and in the auxiliary
parameter $r$ where present), and are machine-checked in Lean.  Thus
$\{0,d\}$ is the unique edge of $A_d$.
\end{proof}

\section{Proof of the theorem and the order bound}

\begin{proof}[Proof of Theorem~\ref{thm:main}]
Proposition~\ref{prop:triangle} gives triangle-freeness and shows that
adding any nonedge creates a triangle.  Corollary~\ref{cor:alpha} gives
$\alpha(G_t)=t-1$, so $G_t$ is $R(3,t)$-good.  Finally,
Proposition~\ref{prop:deletion}, followed by translation and negation,
shows that deleting any edge creates an independent set of order $t$.
These are exactly the four asserted properties.
\end{proof}

Przybocki et al.~proved the general lower bound
$\DS(s,t)\ge2s+2t-7$~\cite{PrzybockiEtAl2026}.  Taking $s=3$ and using
Theorem~\ref{thm:main} gives the following consequence.

\begin{corollary}\label{cor:DS}
For every odd $t\ge17$,
\[
 2t-1\le\DS(3,t)\le5t-10.
\]
\end{corollary}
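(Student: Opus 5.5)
The corollary should follow directly from Theorem~\ref{thm:main}, combined with the general lower bound $\DS(s,t)\ge2s+2t-7$ of Przybocki et al.~\cite{PrzybockiEtAl2026}, which we take as given.

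\textbf{Upper bound.} Fix an odd $t\ge17$ and set $n=5t-10$. By Theorem~\ref{thm:main}, $G_t=\Circ(n,S_t)$ is doubly saturated $R(3,t)$-good. It has exactly $5t-10$ vertices. Since $\DS(3,t)$ is the minimum order of such a graph, and at least one such graph exists, $\DS(3,t)$ is well defined and satisfies $\DS(3,t)\le5t-10$.

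Before invoking the theorem, check its hypotheses. We need $n\ge3$ and $S_t\subseteq\{1,\dots,\lfloor n/2\rfloor\}$, so that $\Circ(n,S_t)$ is defined as in the introduction. The largest element of $S_t$ is $2t-4$, and $2t-4<(5t-10)/2$ holds whenever $t>2$. The interval $\iz{t+1}{(3t-9)/2}$ is nonempty, since $t+1\le(3t-9)/2$ holds for $t\ge11$. In any case these conditions were already used in Section~\ref{sec:sumset} when writing $t=2m+1$ with $m\ge8$.

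\textbf{Lower bound.} Substituting $s=3$ into $\DS(s,t)\ge2s+2t-7$ gives
\[
\DS(3,t)\ge 6+2t-7=2t-1.
\]

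Combining the two bounds gives the stated chain $2t-1\le\DS(3,t)\le5t-10$. There is no real obstacle here. The only point needing care is confirming that the cited lower bound is stated unconditionally for all $s,t$ in the range considered, or at least covers $s=3$ and $t\ge17$, so that the substitution is legitimate.
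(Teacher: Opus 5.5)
Your proposal is correct and is exactly the paper's argument. The upper bound is the order $5t-10$ of $G_t$ via Theorem~\ref{thm:main}, and the lower bound comes from substituting $s=3$ into the Przybocki et al.\ bound $\DS(s,t)\ge 2s+2t-7$.
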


\section{Formal verification and reproducibility}\label{sec:formal}

The complete argument is formalized in Lean~4.32.2 with Mathlib~4.32.2.
The formalization uses $k=m-8$ and defines
\[
 T(k)=2k+17,\qquad Q(k)=2k+15,\qquad
 N(k)=10k+75=5T(k)-10.
\]
The graph \texttt{Ramsey.G} at parameter $k$ has vertex type
$\Z_{N(k)}$ and the connection set of Theorem~\ref{thm:main}.  The
shifted result is \path{Ramsey.G_isDoublySaturatedRST}.  The final
theorem \path{Ramsey.odd_t_doubly_saturated_ramsey} says that, for
every odd $t\ge17$, there is a $k$ such that $t=T(k)$ and $N(k)=5t-10$,
and such that
\texttt{IsDoublySaturatedRST (G k) 3 t} holds.

The source modules mirror the mathematical proof as follows.
\begin{center}
\footnotesize
\renewcommand{\arraystretch}{1.02}
\begin{tabular}{@{}>{\raggedright\arraybackslash}p{0.22\textwidth}
                    >{\raggedright\arraybackslash}p{0.72\textwidth}@{}}
\toprule
role&principal modules\\
\midrule
local graph facts&
  \texttt{SumsetConsequences.lean}\newline
  \texttt{DirectedConnection.lean}, \texttt{LocalProperties.lean}\\
deletion witnesses&
  \texttt{Deletion*.lean}, \texttt{GraphTransport.lean}\\
five-layer reduction&
  \texttt{FiberReduction.lean}, \texttt{IndependenceConsequences.lean}\newline
  \texttt{CertificateInterface.lean}, \texttt{FiberModelBridge.lean}\\
finite certificate&
  \texttt{SmallCertificate.lean}, \texttt{SmallCertificateTransport.lean}\\
affine certificate&
  \texttt{LargeCertificate.lean}, \texttt{LargeCertificateTransport.lean}\\
final assembly&\texttt{Main.lean}\\
\bottomrule
\end{tabular}
\end{center}
The transport modules prove that the large-parameter truth tables apply over
complete affine ranges rather than sampled values.  The final module selects
the two regimes and invokes their common assembly theorem.

The formal proof sources underlying this manuscript are available at
\url{https://github.com/prog529007/ramsey} at commit \texttt{2381cb840c66}.
A clean verification uses
\begin{verbatim}
git clone https://github.com/prog529007/ramsey.git
cd ramsey
git checkout 2381cb840c660f998d9dac794edfe193088e6818
cd lean
lake build
\end{verbatim}
The Lake manifest pins Mathlib~4.32.2, at commit \texttt{905b95818eb3}.
From the repository root, \texttt{python3 verify\_all.py} runs separate Python
and C++17 checkers.  These provide redundant cross-checks and are not
dependencies of the Lean theorem.

For clarity about the trust boundary, the project source under
\texttt{lean/Ramsey} contains no \texttt{sorry}, \texttt{admit}, unsafe
declaration, or user-written axiom.  It uses \texttt{native\_decide} throughout
finite-state portions of the development, including the $22$-parameter
checker, finite layer classifications and model bridges, and truth tables
in the uniform affine certificate.  This is therefore a machine-checked
proof with compiled decision procedures, not a kernel-only proof.  Besides
Lean's kernel and the classical principles used by Mathlib, the trusted
base for these steps includes the \texttt{native\_decide} elaborator and the
native compilation path: Lean's compiler, code generator, runtime, and the
external compiler, linker, and platform.  Under the pinned toolchain, an
axiom audit of the final theorem reports \texttt{propext},
\texttt{Classical.choice}, \texttt{Quot.sound}, and $607$ generated
\texttt{\_native.native\_decide.ax\_*} declarations, but no
\texttt{sorryAx}.

\appendix

\section{The normalized certificate data}\label{app:certificate}

This appendix records the finite data used in the uniform proof of the
normalized hole inequality.  Throughout, $q=2m-1$ and $m\ge30$.

\subsection{Pairs with gain \texorpdfstring{$h=2$}{h=2}}\label{app:h2}

The pairs $1\le r<s<q$ for which $h(r,s)=2$ are exactly
\begin{align*}
&(1,3),(1,m-1),(1,m+1),(1,q-2),\\
&(2,3),(2,4),(2,m-1),(2,m+2),(2,q-2),(2,q-1),\\
&(m-3,m-1),(m-3,q-2),\\
&(m-2,m-1),(m-2,q-1),\\
&(m,m+1),(m,m+2),(m,q-2),(m,q-1),\\
&(q-4,q-2),(q-3,q-2),(q-3,q-1).
\end{align*}
Every pair has one endpoint in $L$ and one in $R$, for the sets $L,R$
defined in Section~\ref{sec:large}.  Thus this graph is bipartite, which is
the structural fact needed in~\eqref{eq:fourdouble}.

\subsection{Disjoint core packs}\label{app:cores}

A row
\[
 a:K_1\mid\cdots\mid K_p
\]
means that, after fixing a type-$0$ double at fiber $a$, the sets
$K_1,\ldots,K_p$ are pairwise-disjoint unsatisfiable cores.  Type-$1$
rows follow from the normalized-graph automorphism
\begin{equation}\label{eq:reflection}
 (r,z)\longmapsto(q-r,2-z),\qquad
 f(q-r)\equiv-1-f(r)\pmod5.
\end{equation}
In particular, a type-$0$ double at $a$ maps to a type-$1$ double at
$q-a$, and a core $K$ maps to $q-K$.
The following exceptional one-core rows, together with their reflections,
handle the end indices:
\begin{center}
\begin{tabular}{@{}cc@{}}
\toprule
fixed double&core\\
\midrule
$1_0$&$\{m-2,m,m+2\}$\\
$1_1$&$\{m-2,m,m+2\}$\\
$2_0$&$\{1,m,m+1,q-3\}$\\
$2_1$&$\{1,m,m+1\}$\\
\bottomrule
\end{tabular}
\end{center}
The middle packs are
\begin{align*}
m-2:\;&\{1,3,m\}\mid\{m-5,m-3,q-2\}
       \mid\{2,4,m-4,q-4\},\\
m-1:\;&\{m,m+1,m+3\}\mid\{q-3,q-2,q-1\},\\
m:\;&\{1,2,4\}\mid\{m-3,m-2,m-1,m+6\},\\
m+1:\;&\{m+3,m+4,q-3,q-2\}\mid\{2,3,m+2\}
       \mid\{m-2,m-1,q-1\}.
\end{align*}

For
\[
 a\in\iz{10}{m-8}\cup\{m+5,m+6\}\cup\iz{m+8}{q-7},
\]
the generic four-core row is
\begin{align*}
 K_1&=\{1,3,a-4,a-2\},&
 K_2&=\{2,4,a-3,a-1\},\\
 K_3&=\{a+1,a+3,m-3,m-1\},&
 K_4&=\{a+2,a+4,m-2,q-1\}.
\end{align*}
The remaining four-core rows are listed below.  The four math groups in
the right column are the four cores. Line breaks have no mathematical
significance.

\begingroup
\small
\setlength{\tabcolsep}{4pt}
\begin{center}
\begin{tabular}{@{}c p{0.87\textwidth}@{}}
\toprule
$a$&pairwise-disjoint cores\\
\midrule
$3$ & $\{4,m-3,m+4,q-2\}$ $\mid$
       $\{5,6,q-3,q-1\}$ $\mid$
       $\{m-2,m-1,m+3,m+5\}$ $\mid$
       $\{m,m+1,m+2\}$\\
$4$ & $\{1,m+1,m+3\}$ $\mid$
       $\{6,7,m-3,m-1\}$ $\mid$
       $\{3,m,m+2\}$ $\mid$
       $\{m+4,m+5,q-4,q-2\}$\\
$5$ & $\{3,m,m+2\}$ $\mid$
       $\{7,m+4,q-4,q-2\}$ $\mid$
       $\{6,8,m-3,m-1\}$ $\mid$
       $\{1,4,m+1,m+5\}$\\
$6$ & $\{7,m-2,m-1,m+7\}$ $\mid$
       $\{4,m,m+2,m+6\}$ $\mid$
       $\{3,5,q-4,q-2\}$ $\mid$
       $\{1,m+1,m+4,m+5\}$\\
$7$ & $\{2,m+7,m+9,q-2\}$ $\mid$
       $\{1,3,5\}$ $\mid$
       $\{8,m-2,m-1,m+6\}$ $\mid$
       $\{9,10,m,m+2\}$\\
\bottomrule
\end{tabular}
\end{center}

\begin{center}
\begin{tabular}{@{}c p{0.87\textwidth}@{}}
\toprule
$a$&pairwise-disjoint cores\\
\midrule
$8$ & $\{1,3,10,12\}$ $\mid$
       $\{2,m+2,m+8,m+9\}$ $\mid$
       $\{9,m+7,q-4,q-2\}$ $\mid$
       $\{5,6,m-2,m-1\}$\\
$9$ & $\{2,6,8,q-2\}$ $\mid$
       $\{m-2,m-1,m+6,m+8\}$ $\mid$
       $\{m,m+2,m+9,m+11\}$ $\mid$
       $\{1,3,7,10\}$\\
$m-7$ & $\{m,m+2,q-8,q-7\}$ $\mid$
       $\{1,3,m-8,m-5\}$ $\mid$
       $\{m-3,m-1,q-6,q-5\}$ $\mid$
       $\{m-11,m-9,q-3,q-1\}$\\
$m-6$ & $\{2,4,q-5,q-4\}$ $\mid$
       $\{q-7,q-6,q-3,q-1\}$ $\mid$
       $\{1,3,m-8,m-4\}$ $\mid$
       $\{m-5,m-3,m-1\}$\\
$m-5$ & $\{2,3,m-8,m-6\}$ $\mid$
       $\{m-3,m-2,m+14,q-3,q-2\}$ $\mid$
       $\{m-7,m,m+1,q-4\}$ $\mid$
       $\{1,m-4,m-1,q-5\}$\\
$m-4$ & $\{m-3,q-16,q-4,q-2\}$ $\mid$
       $\{2,3,m-6,q-6\}$ $\mid$
       $\{1,m-7,m-5,m+1\}$ $\mid$
       $\{m-13,m-2,m,q-1\}$\\
$m-3$ & $\{q-4,q-3,q-1\}$ $\mid$
       $\{1,3,m-7,m-5\}$ $\mid$
       $\{2,m-6,m-4,m+2\}$ $\mid$
       $\{m-2,m,m+1\}$\\
$m+2$ & $\{m+3,m+5,q-4,q-2\}$ $\mid$
       $\{m-2,m-1,m+1\}$ $\mid$
       $\{1,3,5\}$ $\mid$
       $\{m+4,m+6,q-3,q-1\}$\\
$m+3$ & $\{2,4,6\}$ $\mid$
       $\{m,m+5,m+7,q-2\}$ $\mid$
       $\{m-2,m-1,m+1\}$ $\mid$
       $\{3,m+4,q-3,q-1\}$\\
$m+4$ & $\{1,m-1,m+5,m+7\}$ $\mid$
       $\{m+2,m+6,q-3,q-2\}$ $\mid$
       $\{3,4,m-2,q-1\}$ $\mid$
       $\{m,m+1,m+3\}$\\
$m+7$ & $\{6,m-2,m-1,m+6\}$ $\mid$
       $\{2,7,m+9,q-1\}$ $\mid$
       $\{9,m-3,m+8,q-2\}$ $\mid$
       $\{8,m,m+1,m+5\}$\\
$q-6$ & $\{m-4,m-2,m-1,q-4\}$ $\mid$
       $\{2,m-6,m-5,m+2\}$ $\mid$
       $\{m-9,m-3,q-8,q-2\}$ $\mid$
       $\{q-5,q-3,q-1\}$\\
$q-5$ & $\{2,3,q-7,q-4\}$ $\mid$
       $\{1,m-7,m-6,q-2\}$ $\mid$
       $\{m-5,m-3,m-2,q-1\}$ $\mid$
       $\{m,m+2,q-6,q-3\}$\\
$q-4$ & $\{2,4,m-3,q-3\}$ $\mid$
       $\{1,m+1,q-8,q-6\}$ $\mid$
       $\{m-2,m-1,q-7,q-5\}$ $\mid$
       $\{m-6,m-5,m,m+2\}$\\
$q-3$ & $\{2,3,m-5,q-4\}$ $\mid$
       $\{1,m+1,q-6,q-5\}$ $\mid$
       $\{m-6,m-4,m,m+2\}$ $\mid$
       $\{m-3,m-2,m-1,q-8\}$\\
\bottomrule
\end{tabular}
\end{center}
\endgroup

The generic row, the boundary rows, and the middle and end exceptions cover
every index $1\le a<q$.  Every displayed affine expression lies in this
range for $m\ge30$. Each row has distinct, pairwise-disjoint cores.  The
formal certificate proves these assertions together with every core truth
table.  The reflection~\eqref{eq:reflection} then supplies the opposite
double type and proves the lower bound~\eqref{eq:ell} in all cases.

\subsection{Residual typed configurations}\label{app:residual}

We write $a_0$ and $a_1$ for doubles of types $0$ and $1$ at index $a$.
For each compatible configuration that survives the restrictions
in~\eqref{eq:residualranges}, the right column lists distinct fibers whose
available layer domains are already empty because of the displayed
doubles.

\begingroup
\small
\begin{center}
\begin{tabular}{@{}l l@{}}
\toprule
typed doubles&forced empty fibers\\
\midrule
$\{1_0,2_1\}$&$\{m,m+1\}$\\
$\{1_0,(q-2)_0\}$&$\{4,5\}$\\
$\{1_0,(q-1)_1\}$&$\{m-1,m\}$\\
$\{1_1,(q-2)_0\}$&$\{4,5\}$\\
$\{1_1,(q-2)_1\}$&$\{4,5\}$\\
$\{2_0,(q-2)_0\}$&$\{5,6\}$\\
$\{2_0,(q-1)_0\}$&$\{5,6\}$\\
$\{2_1,(q-2)_0\}$&$\{5,6\}$\\
$\{2_1,(q-2)_1\}$&$\{5,6\}$\\
$\{2_1,(q-1)_0\}$&$\{5,6\}$\\
$\{2_1,(q-1)_1\}$&$\{5,6\}$\\
$\{(q-2)_0,(q-1)_1\}$&$\{m-2,m-1\}$\\
$\{1_0,2_1,(m-1)_0\}$&$\{4,5,6\}$\\
$\{1_0,2_1,(q-2)_0\}$&$\{4,5,6\}$\\
$\{1_0,2_1,(q-1)_1\}$&$\{5,6,7\}$\\
$\{1_0,(q-2)_0,(q-1)_1\}$&$\{4,5,6\}$\\
$\{1_1,(m-1)_0,(q-2)_1\}$&$\{4,5,6\}$\\
$\{1_1,(m-1)_1,m_0\}$&$\{2,4,5\}$\\
$\{1_1,m_0,(q-2)_0\}$&$\{2,3,4\}$\\
$\{2_0,m_1,(q-2)_0\}$&$\{1,3,4\}$\\
$\{2_0,m_1,(q-1)_0\}$&$\{1,3,4\}$\\
$\{2_1,(m-1)_0,(q-2)_1\}$&$\{5,6,7\}$\\
$\{2_1,(m-1)_1,(q-1)_0\}$&$\{1,5,6\}$\\
\bottomrule
\end{tabular}
\end{center}

\begin{center}
\begin{tabular}{@{}l l@{}}
\toprule
typed doubles (continued)&forced empty fibers\\
\midrule
$\{2_1,(q-2)_0,(q-1)_1\}$&$\{5,6,7\}$\\
$\{(m-1)_1,m_0,(q-1)_0\}$&$\{1,2,3\}$\\
$\{m_1,(q-2)_0,(q-1)_1\}$&$\{1,3,4\}$\\
\midrule
$\{1_0,2_1,(q-2)_0,(q-1)_1\}$&$\{4,5,6,7\}$\\
\bottomrule
\end{tabular}
\end{center}
\endgroup

The three blocks contain respectively $12$, $14$, and $1$ configurations,
so the table is exhaustive for $D_H=2,3,4$.

\section{Deletion-witness families}\label{app:deletion}

All intervals below are inclusive integer intervals in
$\iz{0}{10m-6}$.  For every $m\ge8$, the pieces in each row are disjoint.

\begingroup
\setlength{\parskip}{0pt}
\setlength{\abovedisplayskip}{5pt plus 1pt minus 1pt}
\setlength{\abovedisplayshortskip}{5pt plus 1pt minus 1pt}
\setlength{\belowdisplayskip}{5pt plus 1pt minus 1pt}
\setlength{\belowdisplayshortskip}{5pt plus 1pt minus 1pt}
\newcommand{\witnesscase}[1]{%
  \par\addvspace{.3\baselineskip}%
  \noindent\emph{#1.}\par\nobreak}

\witnesscase{$d=2m-3$}
\begin{align*}
A_d={}&\{0,m-3,2m-3,2m-1,6m-4,7m-5,10m-7\}\\
&{}\cup\iz{m-1}{m}\cup\iz{5m-1}{6m-6}
\cup\iz{6m-2}{7m-7}.
\end{align*}

\witnesscase{$d=2m-2$}
\begin{align*}
A_d={}&\{0,m-1,2m-2,4m-1,5m-4\}\cup\iz{4}{m-3}\\
&{}\cup\iz{5m-2}{6m-7}\cup\iz{6m-2}{6m}
\cup\iz{10m-8}{10m-6}.
\end{align*}

\witnesscase{$d=2m+2$}
\begin{align*}
A_d={}&\{0,3,m-1,2m-1,2m+2,7m-3\}\cup\iz{m+1}{m+3}\\
&{}\cup\iz{5m+2}{6m-4}\cup\iz{6m-2}{6m-1}
\cup\iz{6m+1}{7m-5}.
\end{align*}

\witnesscase{$d=2m+2+r$, where $1\le r\le m-7$}
\begin{align*}
A_d={}&\{0,r+3,m-1,m+r+3,2m-1,2m+2+r,5m+r,7m-3\}\\
&{}\cup\iz{m+1}{m+r+1}\cup\iz{5m+r+2}{6m-4}\\
&{}\cup\iz{6m-2}{6m+r-1}\cup\iz{6m+r+1}{7m-5}.
\end{align*}

\witnesscase{$d=3m-4$}
\begin{align*}
A_d={}&\{0,3m-4,5m-5\}\cup\iz{4m-5}{4m-4}
\cup\iz{4m-1}{4m}\\
&{}\cup\iz{8m-5}{8m-4}\cup\iz{8m-1}{9m-8}
\cup\iz{9m-6}{10m-9}.
\end{align*}

\witnesscase{$d=3m-3$}
\begin{align*}
A_d={}&\{0,3m-3,5m-4,8m-4\}\cup\iz{4m-4}{4m-3}
\cup\iz{4m-1}{4m}\\
&{}\cup\iz{8m-1}{9m-7}\cup\iz{9m-5}{10m-8}.
\end{align*}

\witnesscase{$d=3m-1$}
\begin{align*}
A_d={}&\{0,m-1,3m-1\}\cup\iz{4m-1}{5m-6}
\cup\iz{5m-2}{5m-1}\\
&{}\cup\iz{9m-7}{9m-5}\cup\iz{9m-2}{10m-6}.
\end{align*}

\witnesscase{$d=4m-2$}
\begin{align*}
A_d={}&\{0,3,4m-2,8m-1\}\cup\iz{4m-1}{4m}\\
&{}\cup\iz{4m+2}{4m+3}\cup\iz{8m+2}{10m-6}.
\end{align*}
\endgroup

These cases partition $S$: the interval component is
\[
\iz{2m+2}{3m-3}
=\{2m+2\}\mathbin{\dot\cup}
 \{2m+2+r:1\le r\le m-7\}\mathbin{\dot\cup}
 \{3m-4,3m-3\}.
\]
For reference, the cardinality computations are
\[
\begin{array}{c|c}
\toprule
d&\card{A_d}\\
\midrule
2m-3&7+2+2(m-4)\\
2m-2&5+(m-6)+(m-4)+3+3\\
2m+2&6+3+(m-5)+2+(m-5)\\
2m+2+r&8+(r+1)+(m-r-5)+(r+2)+(m-r-5)\\
3m-4&3+2+2+2+(m-6)+(m-2)\\
3m-3&4+2+2+(m-5)+(m-2)\\
3m-1&3+(m-4)+2+3+(m-3)\\
4m-2&4+2+2+(2m-7)\\
\bottomrule
\end{array}
\]
and every entry equals $2m+1$.

\bibliographystyle{amsplain}
\bibliography{references}

\end{document}